\newtheorem{theorem}{Theorem}
\theoremstyle{plain}
\newtheorem{lemma}[theorem]{Lemma}
\begin{document}
\title{an elementary proof of borsuk theorem}
\author{Dian Yang}
\date{\today{}}

\maketitle{}

In 1933, Borsuk conjectured that any bounded $d$-dimensional set of nonzero diameter can be broken into $d + 1$ parts of smaller diameter\cite{KB}. This conjecture was disproved for large enough $d$\cite{JG,N,R1,R2,W,HR}, though it is true for low dimensional cases. The paper provides an alternative proof for $d=2$ case.

\begin{theorem}[Borsuk]
Any bounded plane figure can be divided into three pieces of smaller diameters.
\end{theorem}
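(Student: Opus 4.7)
My plan is to reduce the problem to a covering statement plus an explicit partition. First, show that $F$ is contained in a regular hexagon $H$ whose opposite sides lie at distance exactly $d := \operatorname{diam} F$ (a theorem of P\'al). Then cut $H$ itself into three pieces each of diameter strictly less than $d$, and intersect with $F$ to obtain the desired three pieces.

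I would dispatch the hexagon partition first, since it is a finite computation and fixes the target diameter. Let $O$ be the center of $H$, cyclically label its vertices $V_0, V_1, \ldots, V_5$, and let $M_{01}, M_{23}, M_{45}$ denote the midpoints of the three alternating sides $V_0V_1$, $V_2V_3$, $V_4V_5$. The three segments $OM_{01}$, $OM_{23}$, $OM_{45}$ partition $H$ into three congruent pentagons, a typical one being $P = O\,M_{01}\,V_1\,V_2\,M_{23}$. Since the circumradius of $H$ equals $d/\sqrt{3}$, checking the ten pairwise distances among the vertices of $P$ yields $\operatorname{diam} P = |M_{01}M_{23}| = d\sqrt{3}/2$, which is strictly less than $d$. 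Therefore each piece $F \cap P_i$ has diameter at most $d\sqrt{3}/2$, and the theorem follows once the enclosing hexagon is in hand.

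The bulk of the work is producing the enclosing regular hexagon. After replacing $F$ by its closed convex hull (same diameter), I would study the width function $w(\theta)$, giving the distance between the two parallel support lines of $F$ perpendicular to direction $\theta$; one has $w(\theta) \le d$ and $w(\theta + 180^{\circ}) = w(\theta)$. For any orientation $\theta$, taking the three pairs of support lines perpendicular to $\theta$, $\theta + 60^{\circ}$, $\theta + 120^{\circ}$ and pushing each pair outward to width $d$ produces three strips whose intersection is a centrally symmetric enclosing hexagon with opposite sides at distance $d$. For this hexagon to be \emph{regular}, the three strip midlines must be concurrent, a single linear condition on how the slack $d - w(\theta_i)$ is distributed. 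The plan is to vary $\theta$ and invoke a continuity/intermediate value argument, using the $180^{\circ}$-periodicity of $w$ together with the identity $n_\theta + n_{\theta + 120^{\circ}} = n_{\theta + 60^{\circ}}$ among unit vectors in the three directions, to locate an orientation where the midline condition can be met while each strip still contains $F$. This continuity step is where I expect the proof to concentrate its work.
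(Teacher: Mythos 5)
Your second half is fine: the regular hexagon of width $d$ really is cut by the three segments from its center to the midpoints of alternate sides into congruent pentagons of diameter $d\sqrt{3}/2<d$, and intersecting with $F$ finishes the proof once the enclosing hexagon exists. The genuine gap is that the enclosing step --- P\'al's theorem --- is the entire difficulty of this route, and you have only announced a plan for it. Worse, your per-orientation description hides a real obstruction: for a fixed $\theta$ it is in general \emph{impossible} to widen the three support strips to width $d$ so that their midlines are concurrent while each still contains $F$. Take $F$ an equilateral triangle of side $d$ (height $h=d\sqrt{3}/2$) and let the strip normals be, up to sign, the outward unit normals $m_0,m_1,m_2$ of its sides, which sum to zero; the projection of $F$ onto $m_i$ (centroid at the origin) is $[-2h/3,\,h/3]$, so a width-$d$ strip containing $F$ must have midline offset $c_i\in[h/3-d/2,\;d/2-2h/3]$, an interval of strictly negative numbers, while concurrency of the three midlines forces $c_0+c_1+c_2=0$. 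Hence no regular hexagon of width $d$ with sides parallel to the triangle's sides contains $F$, and the intermediate value argument over $\theta$ is not a formality but the core of the covering lemma: you must exhibit a concrete continuous quantity (e.g.\ the concurrency defect of the extremal admissible offsets) and prove it changes sign as $\theta$ increases by $60^{\circ}$, using $w(\theta+180^{\circ})=w(\theta)$ and the way the three directions permute. As written, that step is unproved. (A smaller inaccuracy: the intersection of the three width-$d$ strips need not be centrally symmetric nor touch all six lines; an equiangular hexagon with opposite sides at distance $d$ has opposite sides of \emph{different} lengths unless it is regular.)

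You should also be aware that this is exactly the classical route the paper is written to avoid. Its Main Lemma partitions \emph{any} hexagon with all angles $120^{\circ}$ and opposite sides at distance $1$ --- regular or not --- into three congruent pentagons of diameter $<1$, so only the easy, continuity-free circumscription by such a hexagon is required, and the P\'al-type continuity argument disappears. So either supply the continuity argument in full detail, or prove the partition for a general $120^{\circ}$ hexagon of width $1$ as the paper does, which eliminates precisely the step you yourself identify as the bulk of the work.
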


This theorem has a standard proof by first proving the plane figure (assume its diameter to be 1) can be bounded by a hexagon, with opposite sides parallel and separated by a distance no greater than 1, all of its angles being  $120^{\circ}$,  arbitrarily chosen the direction of one of its sides. By considerations of continuity we may assume that the hexagon is regular. It can be easily partitioned into three congruent pentagons with diameter $\sqrt{3}/2$. Instead of considerations of continuity, one can use the following lemma:
\begin{lemma}[Main Lemma]
A hexagon with opposite sides parallel and separated by distance 1, all of its angles being  $120^{\circ}$, can be divided into 3 parts of diameter less than 1.
\end{lemma}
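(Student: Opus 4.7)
The plan is to use the hexagon's $3$-fold symmetry to cut it into three congruent pieces and then check pairwise vertex distances in one piece. First, I would derive the hexagon's shape: with all interior angles $120^\circ$ and each of the three distances between opposite sides equal to $1$, the closure relation on the six side vectors together with these three distance conditions forces the side lengths to alternate $a, b, a, b, a, b$ with $a + b = 2/\sqrt{3}$. So the hexagon is a one-parameter family, has $D_3$-symmetry, and in particular is invariant under $120^\circ$ rotation about its centroid $O$.

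Without loss of generality $a \le b$, so $a \le 1/\sqrt{3}$. I would partition the hexagon by drawing the three segments from $O$ to the midpoints $M_2, M_4, M_6$ of the three longer ($b$-length) sides. This produces three congruent pentagons, and by the $3$-fold symmetry it suffices to bound the diameter of one of them, say $P$ with vertices $O, M_2, V_3, V_4, M_4$ (where $V_3V_4$ is the short side sandwiched between the two cut sides). Placing $O$ at the origin and using the $60^\circ$-step directions of the sides, the coordinates of all five vertices of $P$ become explicit functions of $a$ (with $b = 2/\sqrt{3} - a$).

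Checking the $\binom{5}{2} = 10$ pairwise distances is then routine algebra. The edges of $P$ are $|V_3V_4| = a$, $|M_2V_3| = |V_4M_4| = b/2$, and $|OM_2| = |OM_4| = (2a+b)/(2\sqrt{3})$, all clearly well below $1$. The diagonals from $O$ and the two ``cross diagonals'' $|M_2V_4| = |V_3M_4|$ reduce, using $a + b = 2/\sqrt{3}$, to quantities that stay below $\sqrt{3}/2$. The binding distance is the ``wide'' diagonal $|M_2 M_4| = (2a+b)/2 = (a + 2/\sqrt{3})/2$, which, given $a \le 1/\sqrt{3}$, is at most $\sqrt{3}/2 < 1$, with equality exactly at the regular hexagon.

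The main subtlety, and the step where one could go astray, is the choice of which three alternating sides to cut. Had one cut the midpoints of the short sides instead, the analogous wide diagonal would be $(b + 2/\sqrt{3})/2$, and this exceeds $1$ whenever $b$ is close to the degenerate value $2/\sqrt{3}$ (i.e.\ when the hexagon is near an equilateral triangle of side $2/\sqrt{3}$). So the normalization $a \le b$ is not cosmetic: it tells us to cut through the three longer sides. Once that choice is made, the lemma reduces to the elementary inequality $(a + 2/\sqrt{3})/2 \le \sqrt{3}/2$ for $a \in [0, 1/\sqrt{3}]$.
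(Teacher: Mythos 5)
Your proposal is correct and takes essentially the same route as the paper: the same decomposition (cuts from the center to the midpoints of the three longer sides, which are exactly the feet of the paper's perpendiculars $OP$, $OR$), the same normalization $a\le b$ yielding $a\le 1/\sqrt{3}$, and the same binding distance $a+b/2\le\sqrt{3}/2<1$ between the two cut points. The only difference is cosmetic: the paper checks the three nontrivial distances synthetically via the circumscribed equilateral triangle (the auxiliary point $G$), while you carry out the same estimates in coordinates, and your formulas and bounds all check out.
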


\begin{proof}
(Hexagon $ABCDEF$ as shown in Figure 1.)

By construction, $|AB|=|CD|=|EF|$, $|BC|=|DE|=|FA|$. Without loss of generality we assume $|AB|\leq|BC|$. Drop perpendiculars $OP$, $OR$. Hexagon $ABCDEF$ is partitioned into three pentagons congruent to $ABPOR$. It suffices to prove that this pentagon has diameter smaller than 1, or equivalently: $|AP|<1$, $|RP|<1$, $|OB|<1$. 
Extend $BC$ and $FA$ so that they intersect at $G$. Since distances between opposite sides of $ABCDEF$ are 1's, $|AB|+|BC|=|GC|=2/\sqrt{3}$. Hence $|AB|\leq 1/\sqrt{3}$. 

On one hand, since $\Delta GPR$ is equilateral,
\begin{equation}
|AP|\le|RP|=|PG|<|OG|;
\end{equation}
and since $OP\bot GP$, 
\begin{equation}
|OB|\le|OG|.\quad \textnormal{($|OB|=|OG|$ iff. $|AB|=0$)}
\end{equation}
On the other hand,
\begin{equation}
|OG|=\frac{2}{\sqrt{3}}|PG|=\frac{1}{\sqrt{3}}|HG|=\frac{1}{\sqrt{3}}\left(|AB|+\frac{2}{\sqrt{3}}\right)\le1
\end{equation}
We conclude: $|AP|<1$, $|RP|<1$, $|OB|<1$, thus the lemma is proven.

\end{proof}

\begin{figure}
  \begin{center}
    \includegraphics{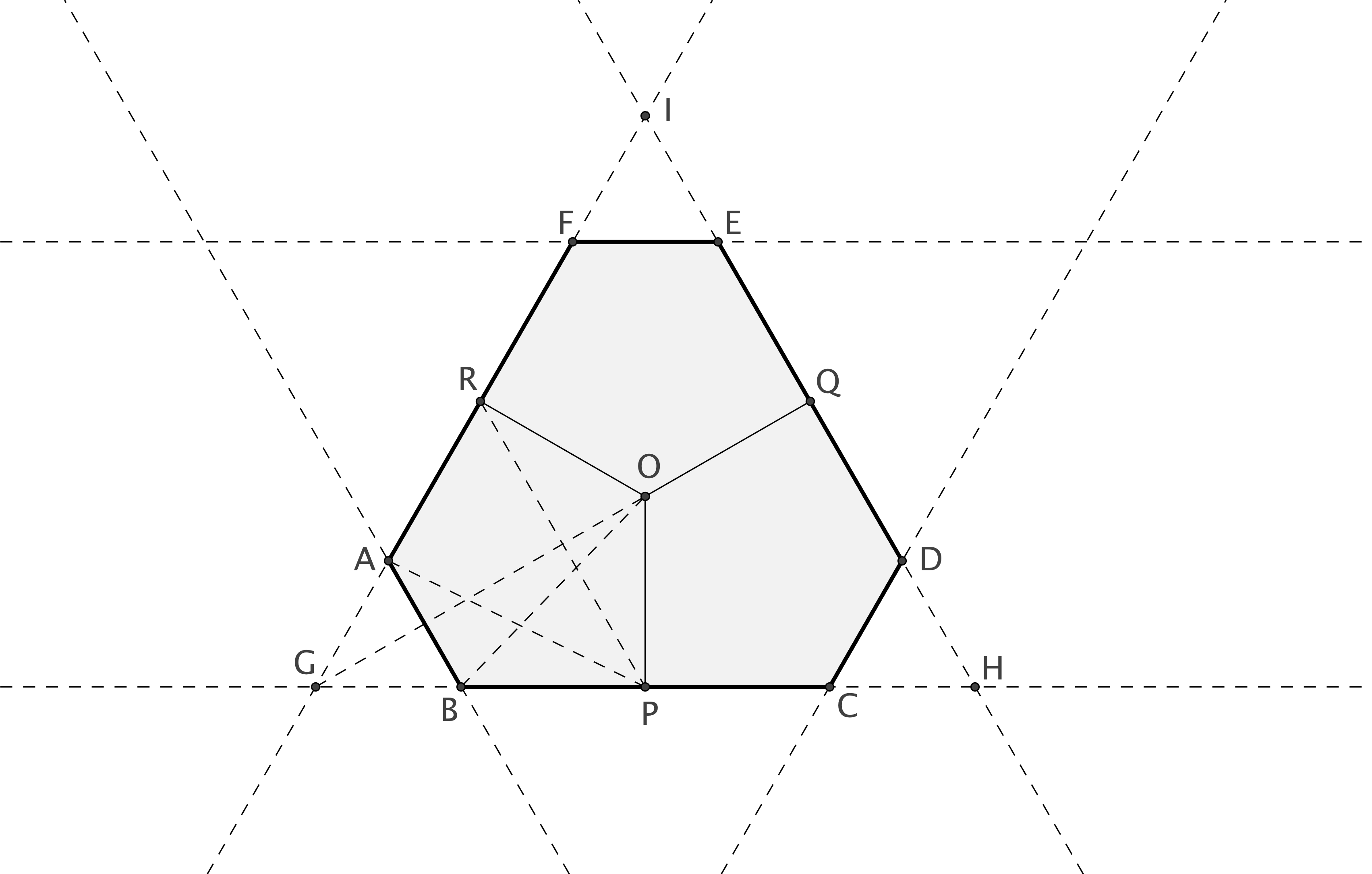}
  \caption{A hexagon with opposite sides parallel and separated by distance 1, all of its angles being  $120^{\circ}$}
  {\it The plane figure $\Phi$ (not shown) is covered by the hexagon.}
  \end{center}
\end{figure}

\acknowledgement{This note is one of the results of my participation in `Math in Moscow' program. I would like to thank the program as well as prof.\! A.\! Skopenkov of Moscow State University for his support and help in making the note more succinct.}

\end{document}